\documentclass[12pt]{elsarticle}
\usepackage{amscd,amssymb,amsmath,amsthm}
\usepackage[all]{xy}
\usepackage{array}
\usepackage{etoolbox}
\usepackage{mathtools}
\usepackage{tikz}
\usetikzlibrary{positioning,calc} 
\usepackage[most]{tcolorbox}
\usepackage{tikz-cd}

\makeatletter
\patchcmd{\ps@pprintTitle}{\footnotesize\itshape
        \hfill\today}{\relax}{}{}

\newdir{ >}{!/8pt/\dir{}*\dir{>}}
\newtheorem{theorem}{Theorem}[section]
\newtheorem*{main theorem}{Main Theorem}

\newtheorem{lemma}[theorem]{Lemma}

\newtheorem*{corollary*}{Corollary}
\newtheorem*{theorem*}{Theorem}

\newtheorem*{con7*}{Conjecture 7*}

\theoremstyle{definition}
\newtheorem{definition}[theorem]{Definition}
\newtheorem*{example*}{Example}

\newcommand\dela[1]{}

 \newcommand{\im}[1]{\text{Im }{(#1)}}

   \newcommand{\settheoremtag}[1]{
  \let\oldthetheorem\thetheorem
  \renewcommand{\thetheorem}{#1}
  \g@addto@macro\endtheorem{
    \global\let\thetheorem\oldthetheorem}
  }

\journal{}

\begin{document}

\begin{frontmatter}

\title{On the decidability of the integrability of finite groups}

 \author[IISER TVM]{Sathasivam Kalithasan}
\ead{sathasivam19@iisertvm.ac.in}
\author[IISER TVM]{Viji Z. Thomas\corref{cor1}}
\address[IISER TVM]{School of Mathematics,  Indian Institute of Science Education and Research Thiruvananthapuram,\\695551
Kerala, India.}
\ead{vthomas@iisertvm.ac.in}
\cortext[cor1]{Corresponding author. \emph{Phone number}: +91 8921458330}

\begin{abstract} An integral of a group $G$ is a group $H$ whose commutator subgroup is isomorphic to $G$. In this paper, we prove that the integrability of a finite group is a decidable problem.

\end{abstract}

\begin{keyword}

\end{keyword}

\end{frontmatter}

 \section{Introduction}

The study of integrals of groups was first considered by Neumann in \cite{BHN}. However, little progress was made until recently, when the authors of \cite{ACC2019} and \cite{ACC2024} undertook a systematic study of these groups and obtained several nice results. One of the results they prove is that if a finite group $G$ is integrable, then its integral is finite. In both these papers, they list several open problems. The one that caught our attention is Question 9.2 in \cite{ACC2024}, and Problem 10.2 of \cite{ACC2019}. This problem was also highlighted in recent talks on the subject by Prof. Cameron \cite{Cameron2024Inverse, Cameron2025Inverse}. In particular, he poses the following question: Is it decidable whether a finite group $G$ is integrable? 

The purpose of the present paper is to answer this question. Our main result may be stated as follows.

\begin{theorem*}
    Let $G$ be a finite group. If $G$ is integrable, then $G$ has an integral $Q$ with \[\vert Q\vert \leq \big(\vert \text{Aut}(G)\vert \vert Z(G)\vert^{2\mu(G)} \big)^{d(Z(G))+1}.\]
\end{theorem*}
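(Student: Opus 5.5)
The plan is to start from an arbitrary integral $H$ of $G$, so $H'\cong G$; we identify $H'$ with $G$. From $H$ we extract a subgroup, and then a quotient, that is still an integral of $G$ and whose order is controlled by the quantities in the statement. I read $\mu(G)$ as the invariant introduced in the paper, essentially the least number of commutators needed to generate $G$ inside an integral, so that $2\mu(G)$ counts the entries of those commutators. If the paper's definition differs, the counting changes but the strategy does not.

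\textbf{Step 1 (small generating set).} Conjugation gives a homomorphism $H\to \text{Aut}(G)$ with kernel $C=C_H(G)$, so $|H:C|\le|\text{Aut}(G)|$. Choose a transversal $T$ of $C$ in $H$. Choose commutators $[a_1,b_1],\dots,[a_{\mu},b_{\mu}]$ with $a_i,b_i\in H$ that generate $G$. Put $K=\langle T,a_1,b_1,\dots,a_\mu,b_\mu\rangle$. Then $G\le K'\le H'=G$, so $K$ is again an integral of $G$, and $KC=H$. Thus $K$ is generated modulo $K\cap C$ by a bounded set, and it has at most $|\text{Aut}(G)|+2\mu(G)$ generators overall.

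\textbf{Step 2 (killing the superfluous centraliser).} Let $N$ be a normal subgroup of $K$ maximal subject to $N\le C$ and $N\cap G=1$. Then $(K/N)'=GN/N\cong G$, so $K/N$ is still an integral. Also $C\cap G=Z(G)$. The aim is to show that in $\bar K=K/N$ the subgroup $\bar C=(K\cap C)N/N$ embeds, in a controlled way, into a product of copies of $Z(G)$. For each $c\in K\cap C$ and each generator $x$ of $K$, the commutator $[c,x]$ lies in $G$. Because $c$ centralises $G$, one checks $[c,x]\in Z(G)$ modulo the relevant terms. The assignment $c\mapsto([c,x])_x$ then gives a map $K\cap C\to Z(G)^{\text{(gens)}}$ whose kernel is central in $K$. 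A central subgroup $Z_0$ meeting $G$ trivially can be absorbed into $N$, after adjusting by a complement argument inside an abelian group. What survives is an extension of something of order at most $|Z(G)|^{2\mu(G)}$, coming from the $a_i,b_i$ only, once the transversal is handled through the action, by a central part. That central part injects into $Z(G)$ modulo $N$, contributing a factor at most $|Z(G)|$.

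\textbf{Step 3 (the exponent $d(Z(G))+1$).} The most delicate point is making the central part of $\bar K\cap\bar C$ mod $G$ genuinely small. The subgroup $\bar Z=Z(\bar K)$ is abelian and contains $Z(G)\cap Z(\bar K)$. The maximality of $N$ says that every nontrivial subgroup of $\bar Z$ meets $G$. So $\bar Z$ is an essential extension of a subgroup of $Z(G)$; its socle lies in $Z(G)$, and therefore its rank is at most $d(Z(G))$. The exponent of $\bar Z$ modulo $Z(G)$ is bounded using the generators and the commutator map. Combining the pieces, I expect to iterate the estimate of Step 2 over a chain of length at most $d(Z(G))+1$, once for each cyclic factor of the socle plus once for the top. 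This produces $\big(|\text{Aut}(G)||Z(G)|^{2\mu(G)}\big)^{d(Z(G))+1}$.

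The main obstacle is Step 3: controlling the part of $K\cap C$ that the commutator map does not see. I would first try the essential-extension/socle argument above. If the exponent bound fails there, I would bound the orders of the images of the generators modulo $G$ using the relation $[c^n,x]=[c,x]^n$ for $c\in C$, which holds since $[c,x]$ is central in $C\langle x\rangle$ modulo $N$.
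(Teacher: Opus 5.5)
Your proposal has a genuine gap, and it sits exactly where you expected, in Step 3. The whole strategy of producing the small integral as a subgroup-then-quotient of a given integral $H$ cannot work, so Step 3 cannot be completed.

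Take $G=C_2$. Here the bound is $(1\cdot 2^{2})^{2}=16$. Let $H=\langle a,b\mid a^{2^{n-1}}=b^2=1,\ b^{-1}ab=a^{1+2^{n-2}}\rangle$ be the modular group of order $2^n$ with $n\ge 5$; it is an integral of $C_2$ of the kind in the Example of the introduction. It has the following properties:
\begin{itemize}
\item $H'=\langle a^{2^{n-2}}\rangle\cong C_2$, and $Z(H)=\langle a^2\rangle$ is cyclic of order $2^{n-2}$.
\item The maximal subgroups $\langle a\rangle$, $\langle a^2,b\rangle$, $\langle a^2,ab\rangle$ are all abelian.
\item Every nontrivial normal subgroup meets the cyclic centre, so it contains the involution $a^{2^{n-2}}$ and hence contains $H'$.
\end{itemize}
So the only section of $H$ that is an integral of $C_2$ is $H$ itself, of order $2^n>16$. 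In your construction this forces $K=H$ and $N=1$. Then your claims that the central part ``injects into $Z(G)$ modulo $N$'' and that ``the exponent of $\bar Z$ modulo $Z(G)$ is bounded'' are both false, since $\bar Z=Z(H)\cong C_{2^{n-2}}$.

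Your socle argument does correctly give $d(\bar Z)\le d(Z(G))$; this is item (iv) of the cited Theorem. It says nothing about the exponent, though. Your fallback $[c^m,x]=[c,x]^m$ only yields $c^{\exp G}\in Z(H)$, which gives no bound on $\exp Z(H)$.

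Two smaller points. First, adjoining the transversal $T$ in Step 1 works against you: the commutator map then lands in $Z(G)^{\vert T\vert+2\mu(G)}$. The subgroup $\langle a_1,b_1,\dots,a_\mu,b_\mu\rangle$ is already an integral, with $\vert K/Z(K)\vert\le\vert\text{Aut}(G)\vert\,\vert Z(G)\vert^{2\mu(G)}$. Second, $\mu(G)$ is the maximal size of a minimal generating set of $G$, not a count of commutators.

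The missing idea is to leave the sections of $H$: keep $H/Z(H)$ and change the central kernel. Let $\alpha=\vert H/Z(H)\vert$, and let $\delta$ be a cocycle of the extension $1\to Z(H)\to H\to H/Z(H)\to 1$. Since $\alpha$ annihilates $H^2(H/Z(H),Z(H))$, $\alpha\delta$ is the coboundary of some $\Phi\colon H/Z(H)\to Z(H)$. The paper then proceeds as follows:
\begin{itemize}
\item It embeds $Z(H)=\alpha X$ in an abelian group $X$ with $d(X)=d(Z(H))$.
\item It divides $\Phi$ by $\alpha$ inside $X$, which turns $\delta$ into a cocycle cohomologous to it over $X$ with values in $\Omega_\alpha(X)$.
\item The resulting extension $Q$ of $H/Z(H)$ by $\Omega_\alpha(X)$ is a subgroup of $E_{\delta'+b}\cong E_{\delta'}$. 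This group contains $E_\delta\cong H$.
\item Commutators in a central extension depend only on images in the quotient, so $Q'=E_{\delta'}'=E_\delta'\cong G$.
\end{itemize}
Hence $\vert Q\vert\le\alpha\cdot\alpha^{d(Z(G))}$. The exponent $d(Z(G))+1$ comes from a kernel of rank at most $d(Z(G))$ and exponent dividing $\alpha$, not from iterating along a chain. For the modular group above one gets $\alpha=4$, $X\cong C_{2^n}$ and $\vert Q\vert=16$. That $Q$ is not a section of $H$, which is exactly why a subgroup-then-quotient approach cannot reach it.
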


Our main theorem shows that the problem whether a finite group is integral is decidable by the following observation made by the authors of \cite{ACC2024}: 
\begin{quote}
\itshape
``compute all groups of order divisible by $|G|$ up to the bound, and decide for each group whether its derived group is isomorphic to $G$.''
\end{quote} 
\begin{corollary*}
The problem of whether a finite group $G$ is integrable is decidable.
\end{corollary*}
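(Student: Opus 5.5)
The Corollary follows once we know that integrability of $G$ forces the existence of an integral whose order is bounded by an explicit computable function of $G$. The Main Theorem provides exactly such a bound, $B(G)=\big(\vert \text{Aut}(G)\vert \vert Z(G)\vert^{2\mu(G)} \big)^{d(Z(G))+1}$. So the plan is to take the Main Theorem as given and check that the search it licenses is a finite, effective procedure.

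\textbf{Step 1: compute the bound.} Given $G$ (say as a multiplication table), compute $B(G)$. The groups $\text{Aut}(G)$ and $Z(G)$ are computable by brute force: enumerate the bijections $G\to G$, keep the homomorphisms, and test centrality elementwise. The rank $d(Z(G))$ is computable by enumerating subsets of $Z(G)$ and finding the smallest one that generates it. The invariant $\mu(G)$ is defined in the paper as an integer determined by $G$, and is computable in the same brute-force manner. Hence $B(G)$ is a computable natural number.

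\textbf{Step 2: run the finite search.} Enumerate, up to isomorphism (or simply with repetitions), every group $Q$ of order $n\le B(G)$ with $|G|\mid n$. There are finitely many multiplication tables on $\{1,\dots,n\}$, and checking the group axioms is decidable. Restricting to $|G|\mid n$ is harmless, since any integral $Q$ contains $Q'\cong G$ and so $|G|$ divides $|Q|$. For each such $Q$, compute $Q'$ by closing the set of commutators under multiplication, then test $Q'\cong G$ by trying all bijections. Output ``integrable'' if some $Q$ passes, and ``not integrable'' otherwise.

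\textbf{Step 3: correctness.} If the procedure finds a $Q$ with $Q'\cong G$, then $G$ is integrable by definition. Conversely, if $G$ is integrable, the Main Theorem gives an integral $Q$ with $|Q|\le B(G)$. The search examines all groups up to that order, so it finds $Q$ or an isomorphic copy. The procedure always terminates, so it decides integrability.

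The only substantive ingredient is the Main Theorem itself; everything else is routine finite computation. The one place needing care is confirming that $\mu(G)$ is effectively computable from $G$. This is expected because it is an explicit group-theoretic invariant of a finite group, but its definition should be checked when it is introduced. In practice the naive enumeration is astronomically slow, but decidability requires only finiteness and effectiveness.
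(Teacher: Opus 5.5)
Your proposal is correct and follows the paper's argument: take the Main Theorem's computable bound, enumerate all groups of order divisible by $|G|$ up to that bound, and test whether each one's derived subgroup is isomorphic to $G$. Your one hedge is settled by Theorem~2.1(ii), which defines $\mu(G)$ as the maximal size of a minimal generating set of $G$, so it is computable by brute-force enumeration of subsets of $G$.
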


The authors of \cite{ACC2024} prove the following theorem which attempts to bound the order of some integral of a finite group $G$.
\begin{theorem*}[\cite{ACC2024}, Theorem~2.1]
Suppose there is a function $F$ from finite groups to natural numbers such that, if $G$ is an integrable finite group, then $F(G)$ is a bound for the exponent of the centre of some integral $H$ of $G$. Then there is a function $F^*$ from finite groups to natural numbers such that, if $G$ is an integrable finite group, then $G$ has an integral of order at most $F^*(G)$.
\end{theorem*}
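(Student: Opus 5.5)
The plan is to cut an arbitrary integral $H$ of $G$, with $\exp Z(H)\le F(G)$, down to a subgroup and then a quotient whose order is bounded in terms of $G$ alone. Throughout, $d=d(G)$ and $e=F(G)$. Since $G=H'$ is finite, we may pick commutators $[x_1,y_1],\dots,[x_d,y_d]$ with $x_i,y_i\in H$ that generate $G$. Put $X=\{x_i,y_i\}$ and $K=\langle X\rangle Z(H)$. Then $G\le K'\le H'=G$, so $K$ is again an integral of $G$. Moreover $Z(H)\le Z(K)$, and $K/Z(H)$ is generated by at most $2d$ elements.

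\emph{Step 1: bound $|K/Z(K)|$.} Each generator in $X$ has at most $|K'|=|G|$ conjugates, since $x^k=x[x,k]$. Hence the centralizer of each element of $X$ has index at most $|G|$. As $Z(H)$ is central, $Z(K)=\bigcap_{x\in X}C_K(x)$, so
\[
|K/Z(K)|\le |G|^{2d}.
\]
A sharper bound goes through $C_K(G)$: we have $|K/C_K(G)|\le|\mathrm{Aut}(G)|$, and $C_K(G)/Z(K)$ is controlled by $Z(G)$ via the maps $c\mapsto [c,x]\in Z(G)$. This is where a factor like $|\mathrm{Aut}(G)|\,|Z(G)|^{2d}$ appears.

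\emph{Step 2: shrink the centre by factoring out a central subgroup meeting $G$ trivially.} Let $A=Z(K)$. For any $N\le A$ with $N\cap G=1$, the quotient $K/N$ satisfies $(K/N)'=GN/N\cong G$, so it is still an integral. Moreover $|K/N|=|K/A|\cdot|A/N|$. Take $N$ maximal subject to $N\cap (A\cap G)=1$. Then $(A\cap G)N/N$ is an essential subgroup of the finite abelian group $A/N$. In particular $A/N$ has $p$-rank at most $d(A\cap G)\le d(Z(G))$ for every prime $p$. So $|A/N|\le \exp(A/N)^{\,d(Z(G))}$, and it remains to bound the exponent.

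\emph{Step 3 (the main obstacle): bound $\exp(Z(K)/N)$ using $\exp Z(H)\le e$.} The difficulty is that $Z(K)$ may be strictly larger than $Z(H)$, so the hypothesis does not apply to it directly. I would first use the transfer $V\colon K\to Z(K)$ with $n=|K:Z(K)|$, which satisfies $V(z)=z^n$ for central $z$ and kills $K'=G$. This gives $z^{n}\in V(K)$, and $V(K)$ is the image of the abelian group $K/G$. The generators of $K/G$ coming from $Z(H)$ contribute elements of order dividing $e$. The images of $X$ act on $G$ with order dividing $|\mathrm{Aut}(G)|$, and powers landing in $C_K(G)$ differ from central elements by elements of $Z(G)$. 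Combining these should give $\exp(Z(K)/N)\le n\cdot e\cdot|\mathrm{Aut}(G)|\cdot|Z(G)|$, or a similar bound. If the transfer argument does not close, I would instead replace $K$ by $\langle X\rangle C$ for a suitable subgroup $C\le C_H(G)$ of bounded exponent, chosen so that its centre provably lies in $Z(H)Z(G)$.

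Finally, multiplying the bounds from Steps 1--3 gives
\[
|K/N|\le |G|^{2d}\,\big(n\,e\,|\mathrm{Aut}(G)|\,|Z(G)|\big)^{d(Z(G))},
\]
where $n\le |G|^{2d}$. The right-hand side depends only on $G$ and $e=F(G)$, and we take this to be $F^*(G)$.
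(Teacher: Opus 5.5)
Your Steps 1 and 2 are sound and follow the route of the proof in \cite{ACC2024}. The paper records that proof's conclusions in its theorem listing properties (i)--(iv) of a suitable integral. The route is: restrict to the subgroup generated by the $x_i,y_i$, bound the central quotient by counting conjugates, and factor out a central $N$ with $N\cap G=1$ so that the centre has rank at most $d(Z(G))$.

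A minor point: you have not justified that $G$ is generated by $d(G)$ commutators. Take instead an irredundant generating subset of the set of commutators, which has at most $\mu(G)$ elements, and replace $d$ by $\mu(G)$ throughout.

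The genuine gap is Step 3, the only place where the hypothesis on $F$ enters. The transfer yields nothing new. Since $|K:Z(K)|=n$, the transfer into $Z(K)$ is simply $V(x)=x^n$ for every $x\in K$. So bounding the order of $V(x)$ for $x\in X$ is the same as bounding the order of the element $x^n\in Z(K)$, which is exactly what you set out to prove. Your remarks about the action of $x$ on $G$ and about powers in $C_K(G)$ have the same defect. Stated correctly, the second says $c^{\exp Z(G)}\in Z(K)$ for $c\in C_K(G)$. Both only produce further powers of $x$ lying in $Z(K)$, with no control on their orders. Nothing in your argument ever lets $\exp Z(H)\le e$ act on elements of $Z(K)\setminus Z(H)$. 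So the bound $n\,e\,|\mathrm{Aut}(G)|\,|Z(G)|$ is asserted rather than derived, and the fallback with $\langle X\rangle C$ is not an argument.

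The missing idea is to argue in $H$ rather than in $K$: show that a bounded power of each $z\in Z(K)$ commutes with all of $H$, not merely with $K$. This is easy.
\begin{itemize}
\item Since $G=K'\le K$, we have $Z(K)\le C_H(G)$.
\item For $c\in C_H(G)$ and $h\in H$, the commutator $[c,h]=c^{-1}c^{h}$ lies in $G\cap C_H(G)=Z(G)$, because $C_H(G)\trianglelefteq H$.
\item As $c$ centralizes $[c,h]$, the identity $[ab,h]=[a,h]^{b}[b,h]$ gives $[c^k,h]=[c,h]^k$ for all $k$. Hence $c^{\exp Z(G)}\in Z(H)$.
\end{itemize}
This is essentially part (xi) of the paper's lemma on $C_H(G)$, stated there with $\exp G$. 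Consequently $\exp Z(K)$ divides $\exp Z(G)\cdot\exp Z(H)$, so $\exp\bigl(Z(K)/N\bigr)\le |Z(G)|\,F(G)$. Your Steps 1--2 then give an integral $K/N$ with $|K/N|\le |G|^{2\mu(G)}\bigl(|Z(G)|\,F(G)\bigr)^{d(Z(G))}$, which serves as $F^*(G)$.
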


They give the following beautiful example, which hints at the obstruction to their approach.

\begin{example*}\label{ex:obstruction}
For every $n \geq 3$, the group $C_2$ has an integral
\[
G_n = \langle a, b \mid a^{2^{n-1}} = b^2 = 1,\; b^{-1}ab = a^{-1} \rangle
\]
of order $2^n$, with $Z(G_n)$ cyclic of order $2^{n-2}$.
\end{example*}

A natural approach to bounding the order of an integral is to attempt to select an integral whose centre has smaller exponent. However, as stated in \cite{ACC2024},
\begin{quote}
\itshape
``Every proper subgroup or factor group of the group $G_n$ is abelian, so it is not at all clear how we could ``reduce'' it to a group with smaller cyclic center, although clearly such groups do exist.``
\end{quote}
This example shows that such a reduction is not straightforward in general. By employing cohomological methods, we overcome this obstruction.

 \section{Preliminary results}

 In the next theorem, we collect some of the results from the proof of Theorem~2.1 of \cite{ACC2024}.

 \begin{theorem}[\cite{ACC2024}, Theorem~2.1]\label{Integral with properties}
    Let $G$ be an integrable finite group. Then $G$ has an integral $H$ such that 
    \begin{itemize}
        \item[(i)] $H$ is a finite group.
        \item[(ii)] $H$ can be generated by some $t$ elements such that $t \le 2\mu(G)$, where $\mu(G)$ is the maximal size of a minimal generating set of $G$.
        \item[(iii)] $\vert H/Z(H)\vert \leq \vert \text{Aut }(G)\vert\;\vert Z(G)\vert^{2\mu(G)}$
        \item [(iv)] $d(Z(H))\leq d(Z(G))$, where $d(A)$ is the minimal number of generators of the abelian group $A$.
    \end{itemize}
\end{theorem}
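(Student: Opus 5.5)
We need to reconstruct how an integral $H$ with properties (i)–(iv) arises. Throughout, fix an integral $K$ of $G$, so $K'\cong G$; identify $G$ with $K'$.

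\textbf{Step 1: finiteness and a bounded number of generators.} For (ii), choose a minimal generating set $g_1,\dots,g_m$ of $G$ with $m\le\mu(G)$. Each $g_i$ lies in $K'$ and so is a product of commutators in $K$. I would first try to show that a single commutator suffices for each $g_i$: if $g_i=[x_i,y_i]$, put $H=\langle x_1,y_1,\dots,x_m,y_m\rangle$. Then $H'\le K'=G$, and $H'\ni [x_i,y_i]=g_i$ for every $i$, so $H'=G$. Thus $H$ is an integral generated by $t\le 2\mu(G)$ elements. If some $g_i$ is only a product of several commutators, I would instead use minimality of generating sets: among all generating sets of $G$ consisting of commutators of $K$, pick a minimal one; its size is at most $\mu(G)$ by definition of $\mu$. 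Since $K'$ is generated by commutators, such a set exists, and this gives (ii) cleanly. For (i), the cited fact from \cite{ACC2019} says an integral of a finite group is finite. Alternatively, $H$ is finitely generated with $H'$ finite and $H/H'$ finitely generated abelian, and I will cut $H$ down further anyway.

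\textbf{Step 2: bounding $H/Z(H)$.} Conjugation gives a map $H\to\mathrm{Aut}(G)$ with kernel $C_H(G)$, so $|H/C_H(G)|\le|\mathrm{Aut}(G)|$. Next, for $c\in C_H(G)$ and each generator $x_j$, the commutator $[c,x_j]$ lies in $G$ and commutes with $c$. I expect to show that $c\mapsto([c,x_j])_j$ has image in $Z(G)^t$, and that its fibres are cosets of $Z(H)\cap C_H(G)$. If so, $|C_H(G)/Z(H)|\le|Z(G)|^{t}\le|Z(G)|^{2\mu(G)}$, and combining the two bounds gives (iii). The delicate point is that $[c,x_j]$ need not be central in $G$ a priori. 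I would handle this by noting that it centralizes $G$: for $g\in G$ we have $c^{x_j}\in C_H(G)$, so $[c,x_j]=c^{-1}c^{x_j}$ acts trivially on $G$, and it lies in $G$, hence in $Z(G)$. Moreover the map $c\mapsto[c,x_j]$ is a homomorphism on $C_H(G)$ modulo this centrality. This is the step I expect to be the main obstacle, and I would check the homomorphism property carefully using the commutator identities.

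\textbf{Step 3: generators of the centre.} For (iv), write $Z=Z(H)$. Since $H/Z$ is finite and $H$ is finitely generated, $Z$ is a finitely generated abelian group. I would replace $H$ by a quotient $H/N$ with $N\le Z$ and $N\cap G=1$, which keeps the derived subgroup isomorphic to $G$. Choose $N$ maximal with this property; then $Z/N$ has $Z(G)$-image essential, i.e.\ the image of $Z\cap G$ meets every nontrivial subgroup of $Z/N$. Hence $Z/N$ is finite and embeds, via its socle, so that $d(Z/N)\le d(Z\cap G)\le d(Z(G))$, since $Z\cap G\le Z(G)$. This also gives finiteness, which is (i). I must check that the centre of $H/N$ is not larger than $Z/N$. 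Any enlargement would be controlled: the preimage of $Z(H/N)$ consists of elements $h$ with $[h,H]\le N\cap G=1$, so $Z(H/N)=Z/N$. Finally, the quotient preserves (ii) and can only shrink $H/Z(H)$, so (iii) survives.
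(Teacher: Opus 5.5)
Your proof is correct and is essentially the argument of the cited proof of \cite{ACC2024}, Theorem~2.1. You pass to the subgroup generated by the entries of an irredundant generating set of $G$ consisting of commutators, bound $|H:C_H(G)|$ by $|\text{Aut}(G)|$, embed $C_H(G)/Z(H)$ into $Z(G)^t$ via $c\mapsto([c,x_j])_j$, and then factor out a central subgroup $N$ maximal with $N\cap G=1$, so that the image of $Z(H)\cap G$ contains the socle of $Z(H)/N$. The only slip is your side remark that an integral of a finite group is finite, which is false: if $H$ is an integral of $G$ then so is $H\times\mathbb{Z}$. The correct fact is that a finite integrable group has a finite integral, and your Step~3 establishes (i) anyway.
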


\begin{definition}
    Let $A$ be a finite abelian group, and let $m$ be a natural number. The Omega subgroup of $A$ is defined as 
    \[\Omega_{m}(A)=\{a\in A\mid a^{m}=1\}.\]
\end{definition}
\section{Bounds on the size of the integral of group}
We do not necessarily need the next two lemmas for the main theorem, but we include them because they maybe of independent interest in the study of integrals of groups.
\begin{lemma}\label{intersection of quotient}
    Let $A$, $B$ be subgroups of a group $G$ and $C$ be a normal subgroup of $G$. If $A\cap B= C$, then $A/C\cap B/C=1.$ Moreover, If $A$ is normal in $G$, then there is an injective map $i: B/C\to G/A.$
\end{lemma}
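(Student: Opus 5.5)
The plan is to use the hypothesis $C\subseteq A$, $C\subseteq B$ and $C$ normal in $G$, so that $A/C$ and $B/C$ are subgroups of $G/C$ and the intersection $A/C\cap B/C$ makes sense. Note that $C=A\cap B$ lies in both $A$ and $B$, and normality of $C$ in $G$ gives normality in $A$ and in $B$. By the correspondence theorem, the subgroups of $G/C$ correspond bijectively to the subgroups of $G$ containing $C$, via $K\mapsto K/C$, and this bijection preserves intersections. Hence $A/C\cap B/C=(A\cap B)/C=C/C=1$. To be safe, I will also check this directly: if $aC=bC$ with $a\in A$, $b\in B$, then $b^{-1}a\in C\subseteq B$, so $a\in B$. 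Thus $a\in A\cap B=C$, and $aC$ is trivial.

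For the second assertion, assume $A\trianglelefteq G$. I would define $i:B/C\to G/A$ by $i(bC)=bA$. This map is well defined because if $bC=b'C$ then $b^{-1}b'\in C\subseteq A$. It is a homomorphism because $A$ is normal, so $G/A$ is a group and $(bb')A=bA\,b'A$. For injectivity, suppose $bA=A$. Then $b\in A\cap B=C$, so $bC$ is trivial.

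Alternatively, one can get the map from the second isomorphism theorem: $B/(A\cap B)\cong AB/A\le G/A$. I will present the direct map, since it is elementary. There is no real obstacle here. The only point needing care is that $C\subseteq A$ and $C\subseteq B$, which makes the quotients meaningful; this follows from $C=A\cap B$.
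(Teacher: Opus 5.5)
Your proof is correct and complete. The first part works either through the correspondence theorem or through your direct coset check, and in the second part $bC\mapsto bA$ is a well-defined injective homomorphism, which is exactly what is wanted. The paper states this lemma without proof, treating it as routine, so there is no argument of theirs to compare against. Your homomorphism, with $A=[H,H]=G$, $B=K=C_H(G)$ and $C=Z(G)$, is precisely the embedding $K/Z(G)\to H/[H,H]$ the paper uses for parts (iv) and (v) of the following lemma.
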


\begin{lemma}\label{Basic properties of K}
    Let $H$ be integral of a group $G$. Set $K=C_H(G)$. Then
    \begin{enumerate}
    \item[(i)] $[K,G]=1.$ 
        \item[(ii)] $K\cap G=Z(G)$.
        \item [(iii)] $[H/K,H/K]\cong G/Z$.
        \item [(iv)] $K/Z(G)\cap [H,H]/Z(G)=1$
        \item [(v)] There exists an injective map $i:K/Z(G)\to H/[H,H].$
        \item [(vi)] $Z(G)$ is a characteristic subgroup of $H$ and $Z(G)\leqslant Z(K)$.
        \item [(vii)] $K$ is a normal subgroup of $H$ and $K/Z(G)$ is in the center of $H/Z(G)$
        \item [(viii)] $[K,H]\leqslant Z(G)$
        \item [(ix)]$Z(H)\leqslant Z(K)$
       \item[(x)] $K$ is nilpotent of class at most two.
        \item [(xi)] If $n$ is the exponent of $G$, then $K^n\leqslant Z(H).$
        
    \end{enumerate}
\end{lemma}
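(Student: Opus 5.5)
The lemma is a list of elementary consequences of the definitions. The plan is to identify $G$ with $[H,H]$, which is normal (indeed characteristic) in $H$, and to prove the items in an order where each uses only the earlier ones. In (iii) I read $Z$ as $Z(G)$.

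\emph{Items (i)--(iii).} Item (i) is the definition of $K=C_H(G)$. For (ii), $K\cap G=C_G(G)=Z(G)$. For (iii), $[H/K,H/K]=[H,H]K/K=GK/K$, and $GK/K\cong G/(G\cap K)=G/Z(G)$ by the second isomorphism theorem and (ii).

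\emph{Items (vi) and (vii).} Next I show that $K$ is normal in $H$. Take $x\in K$, $h\in H$ and $g\in G$. Since $g^{h^{-1}}\in G$, the element $x$ commutes with it, and conjugating by $h$ shows that $x^h$ commutes with $g$. For (vi), $Z(G)$ is characteristic in $G=[H,H]$, which is characteristic in $H$, so $Z(G)$ is characteristic in $H$. Also $Z(G)\le G\cap K$, and $K$ centralizes $G\supseteq Z(G)$, so $Z(G)\le Z(K)$. In particular $Z(G)$ is normal in $H$.

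\emph{Items (iv), (v) and (viii).} Items (iv) and (v) follow from Lemma~\ref{intersection of quotient}, applied with $A=[H,H]=G$ (which is normal in $H$), $B=K$ and $C=Z(G)$, using (ii) and the normality of $Z(G)$. For (viii), $[K,H]\le[H,H]=G$, and $[K,H]\le K$ because $K$ is normal. Hence $[K,H]\le K\cap G=Z(G)$. This is exactly the statement that $K/Z(G)$ is central in $H/Z(G)$, which completes (vii).

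\emph{Items (ix)--(xi).} For (ix), $Z(H)$ centralizes $G$, so $Z(H)\le K$; it also commutes with $K$, so $Z(H)\le Z(K)$. For (x), $[K,K]\le[K,H]\le Z(G)\le Z(K)$, so $K$ has class at most two. For (xi), take $x\in K$ and $h\in H$. By (viii), $x^h=xz$ for some $z\in Z(G)$, and $z$ commutes with $x$ by (vi). Therefore
\[(x^n)^h=(xz)^n=x^nz^n=x^n,\]
since $z\in G$ and $n$ is the exponent of $G$. Hence $x^n\in Z(H)$, and $K^n\le Z(H)$ because $Z(H)$ is a subgroup.

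The only step needing care is (xi), where it matters that the commutator $z$ lies in $Z(G)$. That single fact makes $z$ commute with $x$ (so the power expands) and makes $z^n=1$. Everything else is routine verification.
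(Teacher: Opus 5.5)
Your proof is correct and follows the paper's argument essentially item by item: you use Lemma~\ref{intersection of quotient} for (iv)--(v), and the same inclusion $[K,H]\leqslant K\cap G=Z(G)$ for (vii)--(viii). The differences are cosmetic: you prove (ii)--(iii) directly via $[H/K,H/K]=GK/K\cong G/(G\cap K)$ where the paper cites \cite{ACC2024}, and for (x)--(xi) you use $[K,K]\leqslant[K,H]\leqslant Z(G)\leqslant Z(K)$ and $x^h=xz$ with $z\in Z(G)$ instead of the paper's $[[K,K],K]\leqslant[G,K]=1$ and $[k^n,h]=[k,h]^n$, which amount to the same thing.
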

\begin{proof}
    \textit{(i)} follows easily from definition of $K$. \textit{(ii)} and \textit{(iii)} follows from the proof of \cite[Theorem 3.2]{ACC2024}. The parts \textit{(iv)} and \textit{(v)} follows from Lemma \ref{intersection of quotient}. 
    \par\textit{(vi)} Since $G$ is a characteristic subgroup of $H$ and $Z(G)$ is a characteristic subhgoup of $G$, we have $Z(G)$ is a characteristic subgroup of $H$. Using \textit{(i)} and \textit{(ii)}, we have $Z(G)\leqslant Z(K).$
    \par \textit{(vii)} Since $G$ is a normal subgroup of $H$, $K=C_H(G)$ is a normal subgroup of $H$. Moreover, let $kZ(G)\in K/Z(G)$ and $hZ(G)\in H/Z(G).$ Since ${[k,h]\in K\cap G=Z(G),}$ we have \[[kZ(G),hZ(G)]= [k,h]Z(G)=1 Z(G)\]and hence the proof.
    \par\textit{(viii)} Clearly $[K,H]\leqslant [H,H]=G.$ Since $K$ is normal, we have that  $[K,H]\leqslant K.$  Thus, $[K,H]\leqslant K\cap G=Z(G).$
    \par \textit{(ix)} Since $[Z(H),G]=1$, we have $Z(H)\leqslant C_H(G)=K$. Its easy to see $Z(H)\leqslant Z(K).$
    \par \textit{(x)}  Using Lemma \ref{Basic properties of K}\textit{(i)}, we have that\begin{align*}
        [[K,K],K]\leqslant [[H,H],K]=[G,K]=1
    \end{align*} 
    \par \textit{(xi)} Since $K=C_H([H,H])$, we have that $[kk_1,h]=[k,h][k_1,h]$ for all $k,k_1\in K, h\in H.$ Thus, it is suffices to show that $[k^n,h]=1$ for all ${k\in K}, {h\in H}.$ But $G=[H,H]$ has exponent $n$, so ${[k^n,h]=[k,h]^n=1.}$
    
\end{proof}

Now we come to the main result of this paper.
\begin{theorem}
    Let $G$ be a finite group. If $G$ is integrable, then $G$ has an integral $Q$ with \[\vert Q\vert \leq \big(\vert \text{Aut}(G)\vert \vert Z(G)\vert^{2\mu(G)} \big)^{d(Z(G))+1}.\]
\end{theorem}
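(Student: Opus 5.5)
We start from the integral $H$ supplied by Theorem~\ref{Integral with properties}. Put $P=H/Z(H)$ and $m=|P|$, so that $m\le N:=|\text{Aut}(G)|\,|Z(G)|^{2\mu(G)}$ and $d(Z(H))\le d(Z(G))$. The plan is to cut $H$ down to a subgroup $Q\le H$ with $QZ(H)=H$ and $Q\cap Z(H)\le \Omega_m(Z(H))$.

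First check that any such $Q$ is again an integral of $G$. For $h_i=q_iz_i$ with $q_i\in Q$ and $z_i\in Z(H)$ we have $[h_1,h_2]=[q_1,q_2]$, so $[Q,Q]=[H,H]\cong G$. The order is then controlled as follows:
\[
|Q|=|Q/(Q\cap Z(H))|\,|Q\cap Z(H)| = |P|\,|Q\cap Z(H)|\le m\cdot|\Omega_m(Z(H))|.
\]
A finite abelian group $A$ satisfies $|\Omega_m(A)|\le m^{d(A)}$, since each cyclic factor contributes at most $m$. Hence $|Q|\le m^{1+d(Z(H))}\le N^{d(Z(G))+1}$, which is the claimed bound. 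So everything reduces to producing $Q$.

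To produce $Q$ I would use cohomology. Write $Z=Z(H)$. The group $H$ is a central extension $1\to Z\to H\to P\to 1$ with class $\xi\in H^2(P,Z)$, and $m\xi=0$ because $H^2(P,Z)$ is annihilated by $|P|$. Consider the short exact sequence of trivial $P$-modules
\[
0\to\Omega_m(Z)\xrightarrow{\ \iota\ } Z\xrightarrow{\ f\ } Z^m\to 0,\qquad f(z)=z^m .
\]
Its long exact sequence gives $H^2(P,\Omega_m(Z))\xrightarrow{\iota_*}H^2(P,Z)\xrightarrow{f_*}H^2(P,Z^m)$. If $f_*\xi=0$, then $\xi=\iota_*\eta$ for some $\eta\in H^2(P,\Omega_m(Z))$. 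Let $E$ be the central extension of $P$ by $\Omega_m(Z)$ with class $\eta$. Then $H$ is the pushout of $E$ along $\iota$, so there is a homomorphism $E\to H$ that is $\iota$ on the kernel and the identity on $P$. Its image $Q$ satisfies $QZ=H$ and $Q\cap Z\le\Omega_m(Z)$, as required. Concretely, at the cocycle level this says the following: choosing a section $s$ of $H\to P$, the cocycle $c$ of $s$ can be corrected by a map $\varphi\colon P\to Z$ so that it takes values in $\Omega_m(Z)$, and then $Q=\langle s(x)\varphi(x)\rangle$.

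The step I expect to be the main obstacle is showing that $f_*\xi=0$. The relation $\iota'\circ f=m\cdot\mathrm{id}_Z$, where $\iota'\colon Z^m\to Z$ is the inclusion, only gives $\iota'_*f_*\xi=m\xi=0$. This does not give $f_*\xi=0$ unless $\iota'_*$ is injective on $H^2$, and that fails in general. What I would try first is to work one prime at a time and split each $Z_p$ into cyclic summands $C_{p^k}$. For a summand with $p^k\mid m$, the summand already lies in $\Omega_m$ and nothing needs to be done. For a summand with $p^k>m$, the sequence $0\to C_{p^a}\to C_{p^k}\to C_{p^{k-a}}\to 0$, where $p^a\| m$, can be analysed directly using the fact that $H^2(P,C_{p^k})$ is killed by $p^a$. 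If this case analysis breaks down, the fallback is to replace $\Omega_m$ by $\Omega_{m^2}$, where the analogous factorization is easier to secure; this gives a slightly weaker bound of the same shape, which still suffices for decidability. Throughout, $d(Q\cap Z)\le d(Z)\le d(Z(G))$ by part (iv) of Theorem~\ref{Integral with properties}.
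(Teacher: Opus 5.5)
\textbf{There is a genuine gap.} Your reduction is fine: any $Q\le H$ with $QZ(H)=H$ and $Q\cap Z(H)\le\Omega_m(Z(H))$ is an integral of $G$ of the required order. Your pushout argument does produce such a $Q$ whenever $f_*\xi=0$. But $f_*\xi=0$ is false in general. Neither the prime-by-prime case analysis nor the fallback to $\Omega_{m^2}$ can repair this, because the subgroup $Q$ you want need not exist inside $H$ at all.

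Take $G=C_2$ and the modular group $H=M_{2^n}=\langle a,b\mid a^{2^{n-1}}=b^2=1,\ b^{-1}ab=a^{1+2^{n-2}}\rangle$ with $n\ge 5$. This is the group $G_n$ of the introduction; the relation there must read $b^{-1}ab=a^{1+2^{n-2}}$ for the stated properties to hold. Here $[H,H]=\langle a^{2^{n-2}}\rangle\cong C_2$, $Z(H)=\langle a^2\rangle\cong C_{2^{n-2}}$, $H/Z(H)\cong C_2\times C_2$, $H$ is $2$-generated and $Z(H)$ is cyclic. So $H$ satisfies (i)--(iv) of Theorem~\ref{Integral with properties} for every $n$, with $m=4$.

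However, $Z(H)=\langle a^2\rangle$ is the Frattini subgroup of $H$. Hence $QZ(H)=H$ forces $Q=H$, and then $Q\cap Z(H)=Z(H)$ has exponent $2^{n-2}$. This exceeds $m$, $m^2$, or any fixed function of $m$ once $n$ is large. In particular $f_*\xi\neq 0$ here, since otherwise your construction would give a proper $Q$ with $QZ(H)=H$. This is exactly the obstruction quoted in the introduction.

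\textbf{The missing idea is to leave $H$ and enlarge the kernel before cutting down.} Choose an abelian group $X$ with $d(X)=d(Z(H))$ and $mX=Z(H)$, by replacing each cyclic factor $C_{n_i}$ of $Z(H)$ with $C_{mn_i}$. Let $E$ be the pushout of $H$ along the inclusion $i\colon Z(H)\hookrightarrow X$. Since $X$ is central and $E=XH$, you still have $[E,E]=[H,H]\cong G$.

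Now use the sequence $0\to\Omega_m(X)\to X\xrightarrow{g} mX\to 0$ with $g(x)=mx$. The composite $g\circ i\colon Z(H)\to mX=Z(H)$ is multiplication by $m$. So $g_*(i_*\xi)=m\xi=0$, and $i_*\xi$ comes from $H^2(P,\Omega_m(X))$. Your troublesome inclusion $\iota'$ has disappeared. At the cocycle level, which is how the paper does it: write $m\delta$ as the coboundary of some map $P\to Z(H)$, then subtract the coboundary of a choice of its $m$-th roots in $X$.

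Your subgroup argument then runs verbatim inside $E$ instead of $H$. It gives $Q\le E$ with $QX=E$ and $Q\cap X\le\Omega_m(X)$. Hence $[Q,Q]\cong G$ and $|Q|\le m\cdot m^{d(X)}\le m^{d(Z(G))+1}$, which is the stated bound.
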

\begin{proof}
    Let $G$ be a finite group with integral $H.$ We can assume that $H$ is a finite group satisfying properties listed in Theorem \ref{Integral with properties}. Set ${K=C_H(G)}$, ${\alpha=\vert H/Z(H)\vert}$. Let $\delta$ be a normalized 2-cocycle associated with the central extension
    \[
    1\longrightarrow Z(H)\longrightarrow H\longrightarrow H/Z(H)\longrightarrow 1.
    \]
     The key idea of the proof is to replace the kernel $Z(H)$ by a suitable abelian group whose exponent is known and the extension thus formed is also an integral of $G$. The proof involves two steps. In the first step, we enlarge the kernel $Z(H)$ by a suitable abelian group $X$ containing $Z(H)$ such that $d(Z(H)) = d(X)$. The cocycle $\delta$ is replaced by $i \circ \delta$, where $i \colon Z(H) \to X$ is the inclusion map. We then modify the cocycle $i\circ\delta$ by a suitable coboundary ${b\in B^2(H/Z, X)}$, so that $\im{i\circ\delta+b}\in \Omega_{\alpha }(X).$ In the second step, we show that the extension of $ H/Z(H)$ by $\Omega_{\alpha }(X)$ associated to ${(i\circ\delta)+b}$ is an integral of $G$. The advantage of making the kernel as $\Omega_{\alpha }(X)$  is that it has exponent $\alpha$, and hence its order can be determined.\\

\noindent\textbf{Step 1.} We first form the twisted product $E_\delta := Z(H) \times H/Z(H)$ with group operation
\begin{align}\label{twisted product}
(z_1, h_1)(z_2, h_2) := \bigl(z_1 +z_2 +\delta(h_1, h_2),\ h_1 h_2\bigr),  
\end{align}for all $z_1,z_2\in Z(H), h_1,h_2\in H/Z(H)$. The group $E_{\delta}$ is an integral of $G$ and we have a short exact sequence
\[
1 \longrightarrow Z(H) \longrightarrow E_\delta \longrightarrow H/Z(H) \longrightarrow 1.
\]The group operation of $E_\delta$ is given explicitly in terms of the cocycle $\delta$, which will be used in later computations. Note that the action of $H/Z(H)$ on $Z(H)$ is trivial and that $\alpha\delta\in B^2(H/Z(H),Z(H)).$ Thus, there exists a map 
\[
\Phi: H/Z(H) \longrightarrow Z(H)
\] 
such that 
\[
\alpha\,\delta(h_1, h_2) = \Phi(h_1)+\Phi(h_2)-\Phi(h_1 h_2), \quad h_1, h_2 \in H/Z(H).
\] 
Let $X$ be an abelian group such that $\alpha X \cong Z(H)$ and $d(X)=d(Z(H))$. From now on, we tacitly identify $\alpha X$ with $Z(H)$. For each $h \in H/Z(H)$, fix an $\alpha$-th root $\beta_{\Phi(h)} \in X$ of $\Phi(h)$, i.e., $\alpha \beta_{\Phi(h)}=\Phi(h)$. Define
\[
\Psi: H/Z(H) \longrightarrow X, \quad h \mapsto -\beta_{\Phi(h)}.
\]Then we have a 2-coboundary
\begin{align}\label{translative 2coboundary}
    b(h_1, h_2) := \Psi(h_1)+\Psi(h_2) -\Psi(h_1 h_2) \in B^2(H/Z(H), X).
\end{align}The inclusion $i:Z(H)\to X$ obtained via the identification $\alpha X\cong Z(H)$ induces a map  \[{Z^2(H/Z(H),Z(H))\to Z^2(H/Z(H),X)}.\] Set $\delta'= i\circ \delta$ and note that $\delta'\in  Z^2(H/Z(H),X)$. Similar to \eqref{twisted product}, define the twisted product $E_{\delta'}$ corresponding to $\delta'$. We claim $E_{\delta'}$ is also an integral of $G.$ To see this, consider the commutative diagram

\[
\begin{tikzcd}
1 \arrow[r] & Z(H) \arrow[r] \arrow[d,"i"] 
  & E_{\delta} \arrow[r] \arrow[d,"i_1"] 
  & H/Z(H) \arrow[r] \arrow[d,"i_2"] 
  & 1 \\
1 \arrow[r] & X \arrow[r] 
  & E_{\delta'} \arrow[r] 
  & H/Z(H) \arrow[r] 
  & 1
\end{tikzcd}
\]
where $i_1(z,h)=(i(z),h)).$ Since $i,i_2$ are injective, $i_1$ is also injective. Identifying $E_{\delta}$ with
its image, we have $E_{\delta}\leqslant E_{\delta'}$ and $[E_{\delta}, E_{\delta}]\leqslant [E_{\delta'}, E_{\delta'}]$. Let $(x_1,h_1), (x_2,h_2)\in E_{\delta'}.$ It is easy to see that
\begin{align}\label{Integral_computation}
     \ [(x_1,h_1), (x_2,h_2)]
    &=[(1,h_1),(1,h_2)]\in [E_{\delta}, E_{\delta}].
\end{align}Therefore, $[E_{\delta'}, E_{\delta'}]\leqslant [E_{\delta}, E_{\delta}]$ and hence $E_{\delta'}$ is an integral of $G.$ Let $b$ be the cocycle defined in $\eqref{translative 2coboundary}$ and define the twisted product $E_{\delta'+b}$ similar to $\eqref{twisted product}$. Since $E_{\delta'}\cong E_{\delta'+b}$, we have that $E_{\delta'+b}$ is also an integral of $G.$ Now we claim that $\im{\delta'+b}\in \Omega_{\alpha}(X)$. To this end, 
\begin{align*}
    \alpha(\delta'+b) (h_1,h_2) &= \alpha\delta'(h_1,h_2)+\alpha b(h_1,h_2)\\
    &= \Phi(h_1)+\Phi(h_2)-\Phi(h_1 h_2) +\alpha (\Psi(h_1)+\Psi(h_2) -\Psi(h_1 h_2))\\
    &=\Phi(h_1)+\Phi(h_2)-\Phi(h_1 h_2) +\alpha (-\beta_{\Phi(h_1)}-\beta_{\Phi(h_2)} +\beta_{\Phi(h_1 h_2)})\\
    &= 1
\end{align*}Thus, $\im{\delta'+b}\in \Omega_{\alpha}(X)$, and hence $\delta'+b\in Z^2(H/Z(H),\Omega_{\alpha}(X)).$ \\

\noindent\textbf{Step 2.} Let $Q$ be the twisted product of $H/Z(H)$ by $\Omega_{\alpha}(X)$ associated to $\delta'+b$. Consider the commutative diagram

\[
\begin{tikzcd}
1 \arrow[r] & \Omega_{\alpha}(X) \arrow[r] \arrow[d,"j_1"] 
  & Q \arrow[r] \arrow[d,"j_2"] 
  & H/Z(H) \arrow[r] \arrow[d,"j_3"] 
  & 1 \\
1 \arrow[r] & X \arrow[r] 
  & E_{\delta'+b} \arrow[r] 
  & H/Z(H) \arrow[r] 
  & 1
\end{tikzcd}
\]where $j_1$ is the inclusion map, $j_3$ is the identity map, and $j_2(x,h)=(x,h) $ for all $x\in \Omega_{\alpha}(X)$, $h\in H/Z(H)$. Note that $Q\leqslant E_{\delta'+b}$. Using calculations similar to \eqref{Integral_computation}, it is easy to see that $[Q,Q]= [ E_{\delta'+b}, E_{\delta'+b}]= G. $ Since $H$ satisfies the properties in Theorem \ref{Integral with properties} and noting that $d(\Omega_{\alpha}(X))\leq d(X)=d(Z(H))$, we have
\begin{align*}
    \vert Q\vert&= \vert H/Z(H)\vert \; \vert \Omega_{\alpha}(X)\vert\\
    &\leq    \vert H/Z(H)\vert \; \alpha^{d(\Omega_{\alpha}(X))}\\
    &\leq    \vert H/Z(H)\vert \; \alpha^{d(Z(G))}\\
    &\leq    \vert H/Z(H)\vert \vert H/Z(H)\vert^{d(Z(G))}\\
    &\leq \big(\vert \text{Aut}(G)\vert \vert Z(G)\vert^{2\mu(G)} \big)^{d(Z(G))+1}\\
\end{align*}

\end{proof}

\section*{Acknowledgements} 
Sathasivam K acknowledges the Ministry of Education,  Government of India, for the doctoral fellowship under the Prime Minister's Research Fellows (PMRF) scheme PMRF-ID 0801996. V. Z. Thomas acknowledges research support from ANRF, Government of India grant CRG/2023/004417.

\bibliographystyle{amsplain}
\bibliography{Reference}
\end{document}